\DeclarePairedDelimiter{\diagfences}{(}{)}
\newcommand{\diag}{\operatorname{diag}\diagfences}
\newtheorem{theorem}{Theorem}
\numberwithin{equation}{section}
\renewcommand{\section}{
  \@startsection
  {section}
  {1}
  {0pt}
  {1.1\baselineskip}
  {0.2\baselineskip}
  {\sc \centering}
}
\theoremstyle{definition}
\newtheorem{defn}{Definition}[section]
\DeclareMathOperator{\tr}{tr}
\title{Ecologically Sustainable Partitioning of a Metapopulations Network}
\author{{Dinesh Kumar{~\thanks{Research financially supported by NBHM, Department of Atomic Energy, Govt. of India.}$~~^,$\thanks{Corresponding author. \newline \indent ~~~Emails: kdinesh@iisc.ac.in (Dinesh Kumar), jatingupta@iisc.ac.in (Jatin Gupta), raha@iisc.ac.in (Soumyendu Raha).}}}~, Jatin Gupta, Soumyendu Raha\\
\\ Department of Computational \& Data Sciences,\\
 Indian Institute of Science, Bangalore 560012, India}
\date{}
\begin{document}
\maketitle
\begin{abstract}
A stable population network is hard to interrupt without any ecological consequences. A communication blockage between patches may destabilize the populations in the ecological network. This work deals with the construction of a safe cut passing through metapopulations habitat such that populations remain stable. We combine the dynamical system stability analysis with graph partitioning algorithms in our approach to the problem. It finds such a safe construction, when one exists, provided the algebraic connectivity of the graph components is stronger than all the spatially local instabilities in the respective components. The dynamics of the populations on the spatially discrete patches (graph nodes) and their spatial communication with other patches is modeled as a reaction-diffusion system. By reversing the Turing-instability idea the stability conditions of the partitioned system  are found to depend on local dynamics of the metapopulations and the Fiedler value of the Laplacian matrix of the graph. This leads to the necessary and sufficient conditions for removal of the graph edges subject to the stability of the partitioned  graph networks. An heuristic bisection graph partitioning algorithm has been proposed and examples illustrate the theoretical result.\\
\newline
\textbf{Keywords:} Ecological Networks, Dynamical System Stability, Graph Partitioning.
\end{abstract}

\section{Introduction}

Distribution of the populations over the range of spatially discrete patches is a fundamental and inseparable aspect of their interactions in the ecological domain. Sets  of  spatially isolated populations which are linked by dispersal of multiple potentially interacting species are called metapopulations \cite{Gilpin91, Wilson92}. Human activities, like, construction of roads, railway lines or fencing affects the dispersal of species among their habitat patches. Construction of new infrastructure in between populations habitat reduces both the quantity and quality of wildlife habitat \cite{Reijnen00}. Large continuous habitats become highly fragmented and leaving small habitat patches dispersed throughout the landscape. Populations in such small, isolated habitats have higher risk to become extinct, and simultaneously recolonization chance is reduced \cite{Opdam91,Hanski99}. Such risks merit the study of existence, and if possible, computation of safe spatial cuts, that is, removal of
links between habitat patches.

Theoretical studies have shown that metapopulations persistence depends upon an interaction between local density, dispersal and spatial
heterogeneity \cite{Levin74, Chesson81, Kareiva90}. In \cite{Amarasekare98} authors examined the influences of density dependent and
independent dispersal in local  dynamics considering spatially heterogeneous environment and mortality during dispersal.
The study found that with logistic dynamics, dispersal changes the strength of it within patches, while Allee dynamics creates between-patch effects.
Role of space and diffusion in the dynamics, stability and persistence of populations is studied by \cite{Bascompte94} and it has been shown that
the larger the spatial domain or diffusion, the more unstable the dynamics is. Connectivity or species dispersal movement in long term survival (stability) of the metapopulations remains a research issue. Dispersal of species in metapopulations network plays the role in
both stabilizing and destabilizing metapopulations and depends on dispersal intensity \cite{Briggs04}. The mode of density-dependence dispersal is the key factor for viability of sources and long-term persistence of source–sink systems \cite{Amarasekare04}. In \cite{Bascompte94, Vance84} it is shown that increased dispersal can destabilize the previously stable systems, whereas studies in \cite{Hassell95, Rohani96, Jang00} concluded that density-independent dispersal does not affect the stability.

The unstable butterfly {\it Melitaea Cinixa} populations in every patch can be stabilized by the dispersal movement (diffusion) of populations among habitat patches in the network; their dispersal among the patches affect the dynamics substantially and existence of alternative stable equilibrium points \cite{Hanski94} are possible. Inspired by this real example, we considered the whole spatially structured populations are stable only if there is dispersal movement of individuals among habitat patches. It is akin to the reverse of the Turing-instability condition \cite{Turing52}.

This work provides an approach to find out appropriate partition(s) \emph{i.e.,} human made cut(s) of an ecological metapopulations network such that the populations remain stable after the partitioning. A combination of qualitative theory of differential equations and graph-theory turn out to be a very useful in providing such desired partition(s).

The role of graph theory is not new in ecology; there are many applications of it in conservation biology and landscape ecology.
A theoretical analysis of stability and persistence of ecological metapopulations especially focusing on a marine system has
been done in \cite{Randrup10} where authors determine the conditions of persistence of metapopulations (a age-structured
patch populations where patch sub-populations are connected by larval dispersal) by graph theoretic methods and finds that among various
factors, migration of individuals between patches is very critical factor for the overall stability of metapopulations. Using graph
theory approach in \cite{Urban01} it is shown that the population can persist despite the substantial losses of habitat area,
as long as there exists a spanning tree.

Metapopulations constitute a complex network and the spectra of the complex network (in particular, second smallest eigenvalue of Laplacian matrix, called the Fiedler value) carries a lot of information about them. For example, second smallest eigenvalue of the Laplacian is associated with the connectivity of the graph (network). Master stability function (MSF) technique, to analyze the stability of synchronized state of coupled oscillators depends upon the ratio of the largest to second-smallest eigenvalue of the Laplacian \cite{Pecora98}. Second smallest eigenvalue of Laplacian emerged as a critical parameter in systems and control. Works \cite{Tanner07, Fax04} in networked dynamic system observed that the  Fiedler value is a measure of stability and robustness. In a similar way, the spectrum of the Laplacian of the graph of certain molecules can be used to predict their chemical properties \cite{Merris94, Mohar91}. Our analysis too, shows the significant role played by Fiedler value in the stability of metapopulations.

The problem of partitioning a graph into smaller components, in science and engineering, has many significant practical importance. One practical example is arranging the electronic components on very large scale integrated circuits in such way that number of connections between two partitions is minimum \cite{Kernighan70}. The present work proposes two ways to partition the graph representing the metapopulations network, so that after partition the stability of the populations remains intact in the smaller sub-networks. One method is the exhaustive procedure that gives a set of ``desired`` partitions and analyses all of them for
the ''best`` choice. The other one is a heuristic bisection algorithm with a local convergence subject to the Fiedler value beyond the the threshold level. Unfortunately, graph partitioning itself is a NP-hard (a complexity class of decision problems that are intrinsically harder than those which can be solved in polynomial time) problem \cite{Garey79}.

The outline of the paper is as follows; in the next section metapopulations' dynamics in a network is considered and stability conditions are explored and analyzed. Section 3 provides the exhaustive graph partitioning procedure along with numerical examples. In section 4, a more useful heuristic graph bisection algorithmis  presented and it's implementation is shown on an Erd\H{o}s-R\'{e}nyi graph.

\section{Model \& Linearized Stability Analysis}
A classic approach to metapopulations dynamics are patch occupancy models, where the fraction of occupied patches is considered and
explicit local population dynamics  is ignored \cite{Levins71}. Within-patch dynamics along with the populations' dispersal among patches has been considered in numerous studies such as \cite{Nakao10, Li10, Ide16, Manyombe17}. The regional persistence of predator–prey  interactions has been considered in patch occupancy models in \cite{Caswell78, McCauley93} and also in models with explicit local dynamics \cite{Crowley81}.

We consider an ecological network of prey and predator populations, where both the populations occupy the spatially discrete patches of the network
and diffuse over the corridors that links them. Each patch is represented as a node of the graph $G$. Hence an ecological network of $n$ patches is a graph with $n$ nodes. At each node of the graph $G$, the dynamics of prey and predator populations is governed by a set of two differential equations  (for example, a Lotka-Volterra model equations). Dispersal movement of the species among patches occurs along the links or corridors represented as the edges of the graph. The amount of dispersal between patches is proportional to difference of populations densities \cite{Nakao10}, and the proportionality constant (i.e., the dispersal rate) is given by the weight on the corresponding edge. Thus the dynamics of predator-prey population in the $i$-th habitat patch with passive migration from/to other patches of the populations is given by the following set of ordinary differential equations \cite{Levin74},

\begin{eqnarray}
\label{model}
\begin{aligned}
\dot{x}_i &=& f_{i}(x_{i}, y_{i})+\sum_{{j=1,j\neq i}}^{n}w^{x}_{ij}(x_{j}-x_{i}),\\
\dot{y}_i &=& g_{i}(x_{i}, y_{i})+\sum_{{j=1,j\neq i}}^{n}w^{y}_{ij}(y_{j}-y_{i}),
\end{aligned}
\end{eqnarray}

where $i=1,2,\dots,n$ and $x_i(t)$ and $y_i(t)$ are the prey and predator densities respectively, at time $t$ in patch $i$. Real-valued functions $f_{i} \in C^1([0,\infty)^2)$ and $g_{i} \in C^1([0,\infty)^2)$ represent the dynamics of prey and predator species respectively within the $i^{\text{th}}$ patch, and they are assumed to be arbitrary non-negative continuous differentiable functions over a feasible domain. The net diffusion rates for both the prey and predator populations between patch $i$ and patch $j$ is assumed to be the same in both the directions, i.e., $w^{x}_{ij}=w^{x}_{ji}, ~w^{y}_{ij}=w^{y}_{ji}$ which means the ecological network is an undirected weighted graph.

We assume that the populations are (locally) stable in the connected network, while there is no population stability (including oscillatory behavior) within an isolated habitat patch. For the instability at any individual graph node, any one of the  conditions  must hold true at each point of $(x_i,y_i)$-state space except $x_i=0$ or $y_i=0$.
\begin{enumerate}
\item $\frac{\partial {f_{i}}}{ {\partial x_{i}}}+\frac{\partial {g_{i}}}{ {\partial y_{i}}}>0.$
\item $\frac{\partial {f_{i}}}{ {\partial x_{i}}}+\frac{\partial {g_{i}}}{ {\partial y_{i}}}<0,~~\frac{\partial {f_{i}}}{ {\partial x_{i}}}\frac{\partial {g_{i}}}{ {\partial y_{i}}}-\frac{\partial {f_{i}}}{ {\partial y_{i}}}\frac{\partial {g_{i}}}{ {\partial x_{i}}}<0.$
\end{enumerate}
The above conditions are a direct consequence of the Dulac-Bendixson Criterion \cite{Kot01}. The second condition needs to
be dropped for diffusion induced stability in spatially structured populations.
The system (\ref{model}) may be written in vector form as follows:
\begin{eqnarray}
\label{model_vector}
\begin{aligned}
\dot{X} &=& F - L^{x}X,\\
\dot{Y} &=& G - L^{y}Y,
\end{aligned}
\end{eqnarray}
where $X=(x_{1},x_{2},\dots,x_{n})^{\top}, Y=(y_{1},y_{2},\dots,y_{n})^{\top}, F=(f_{1},f_{2},\dots,f_{n})^{\top}, G=(g_{1},g_{2},\dots,g_{n})^{\top}$, and $L^{x}$ and $L^{y}$ are Laplacians of the graphs corresponding to prey and predator respectively. For brevity, we assume $L^{x}=L^{y}=L$, that is, both prey and predators are having same diffusion rate between patches that are connected. The Laplacian of the graph is defined as follows,
\[L_{ij}=
\begin{cases}
~~~d_i, &\text{if}~~ i=j,\\
-w_{ij}, &\text{if}~~i\sim j,\\
~~~0, &\text{otherwise}.
\end{cases}
\]
The  weight $d_i=\underset{i\sim j}{\sum}w_{ij}$ is  the  sum  of  the  weights  of  edges  incident  on  node $i$.

Linearizing the system (\ref{model_vector}) around it's nontrivial equilibrium solution, we obtain
\begin{eqnarray*}
\left(\begin{array}{c}\dot{X}\\ \dot{Y}\end{array}\right) &=&
\left[
\begin{matrix}
\frac{\partial F}{\partial X}& \frac{\partial F}{\partial Y}\\
\frac{\partial G}{\partial X}& \frac{\partial G}{\partial Y}
\end{matrix}
\right]
\left(\begin{array}{c}X\\Y\end{array}\right)
-
\left[
\begin{matrix}
L& 0\\
0& L
\end{matrix}
\right]
\left(\begin{array}{c}X\\Y\end{array}\right)
\end{eqnarray*}

Let $X= \sum_j \xi_{1}^j (t)\Phi_{j}$ and $Y=\sum_j \xi_{2}^j (t)\Phi_{j}$, where $\Phi_{j} \in \mathbb{R}^{n}$ is an eigenvector of the Laplacian matrix $L$ of the graph $G$ corresponding to the eigenvalue $\lambda_{j}$, {\it i.e}, $L\Phi_{j}=\lambda_{j}\Phi_{j}$ and $\xi_{i} (t), i=1,2$ is the scalar valued function of time. Then, term wise, the above system can be written as
\begin{eqnarray*}
\left(\begin{array}{c}\dot{\xi}_{1}^j\Phi_{j}\\ \dot{\xi}_{2}^j \Phi_{j} \end{array}\right) &=&
\left[\begin{matrix}
\frac{\partial (f_{1},f_{2},\dots,f_{n})}{\partial (x_{1},x_{2},\dots,x_{n})}&\frac{\partial (f_{1},f_{2},\dots,f_{n})}{\partial (y_{1},y_{2},\dots,y_{n})}\\
\frac{\partial (g_{1},g_{2},\dots,g_{n})}{\partial (x_{1},x_{2},\dots,x_{n})}&\frac{\partial (g_{1},g_{2},\dots,g_{n})}{\partial (y_{1},y_{2},\dots,y_{n})}
\end{matrix}\right]
\left(\begin{array}{c}\xi_{1}^j\Phi_{j}\\\xi_{2}^j\Phi_{j}\end{array}\right)-
\lambda_{j} I_{2n}
\left(\begin{array}{c}\xi_{1}^j\Phi_{j}\\\xi_{2}^j\Phi_{j}\end{array}\right)
\end{eqnarray*}

\begin{eqnarray*}
\left(\begin{array}{c}\dot{\xi}_{1}^j\Phi_{j}\\ \dot{\xi}_{2}^j\Phi_{j} \end{array}\right) &=&
\left[\begin{matrix}
\frac{\partial (f_{1},f_{2},\dots,f_{n})}{\partial (x_{1},x_{2},\dots,x_{n})}-\lambda_{j}I_{n}&\frac{\partial (f_{1},f_{2},\dots,f_{n})}{\partial (y_{1},y_{2},\dots,y_{n})}\\
\frac{\partial (g_{1},g_{2},\dots,g_{n})}{\partial (x_{1},x_{2},\dots,x_{n})}&\frac{\partial (g_{1},g_{2},\dots,g_{n})}{\partial (y_{1},y_{2},\dots,y_{n})}-\lambda_{j}I_{n}
\end{matrix}\right]
\left(\begin{array}{c}\xi_{1}^j\Phi_{j}\\\xi_{2}^j\Phi_{j}\end{array}\right)
\end{eqnarray*}
We define
\begin{eqnarray*}
A&:=&
\left[\begin{matrix}
\frac{\partial (f_{1},f_{2},\dots,f_{n})}{\partial (x_{1},x_{2},\dots,x_{n})}&\frac{\partial (f_{1},f_{2},\dots,f_{n})}{\partial (y_{1},y_{2},\dots,y_{n})}\\
\frac{\partial (g_{1},g_{2},\dots,g_{n})}{\partial (x_{1},x_{2},\dots,x_{n})}&\frac{\partial (g_{1},g_{2},\dots,g_{n})}{\partial (y_{1},y_{2},\dots,y_{n})}
\end{matrix}\right]
=
\left[\begin{matrix}
\diag{\frac{\partial f_{1}}{\partial x_{1}},\frac{\partial f_{2}}{\partial x_{2}},\dots, \frac{\partial f_{n}}{\partial x_{n}}}
& \diag{\frac{\partial f_{1}}{\partial y_{1}},\frac{\partial f_{2}}{\partial y_{2}},\dots, \frac{ \partial f_{n}}{\partial y_{n}}}\\
\diag{\frac{\partial g_{1}}{\partial x_{1}},\frac{\partial g_{2}}{\partial x_{2}},\dots, \frac{\partial g_{n}}{\partial x_{n}}}
& \diag{\frac{\partial g_{1}}{\partial y_{1}},\frac{\partial g_{2}}{\partial y_{2}},\dots, \frac{\partial g_{n}}{\partial y_{n}}}
\end{matrix}\right].
\end{eqnarray*}
For each eigenvalue $\lambda_{j}$ of the Laplacian $L$, let the eigenvalue of the coefficient matrix be denoted by $\sigma_{j}$.
These eigenvalues determine the temporal growth and are given by the roots of the characteristic polynomial of the coefficient matrix
$A-\lambda_{j}I$, {\it i.e.,}
 \[\mbox{det} \left( A-\lambda_{j}I -\sigma_{j} I\right) =0.\]
{\it i.e.}, at the $i$th patch, the following holds:
\begin{equation}
\label{stabilityEqn}
\sigma_{j}^{2}-\sigma_{j}\left(\tr J_{i}-2\lambda_{j}\right)+\left(\lambda_{j}^{2}-\lambda_{j} \tr J_{i}+\det{J_{i}}\right)= 0, ~~1\leq i \leq n.
\end{equation}
where $J_{i}$ is the Jacobian of the reaction terms of the prey and predator populations in $i$th patch calculated at the co-existential equilibrium point of the system (\ref{model}), that is, \[J_{i}:=
\left[\begin{matrix}
\frac{\partial f_{i}}{\partial x_{i}} & \frac{\partial f_{i}}{\partial y_{i}}\\
\frac{\partial g_{i}}{\partial x_{i}} & \frac{\partial g_{i}}{\partial y_{i}}
\end{matrix}\right].
\]
From (\ref{stabilityEqn}), it follows that the conditions
\begin{enumerate}
\item $\lambda_{j}^{2}-\lambda_{j} \tr J_{i}+\det{J_{i}}\geq 0 ~\iff~ (\tr J_{i})^{2}-4\det{J_{i}}\leq 0, ~~1\leq i \leq n$.
\item $\lambda_{j}\geq \frac{1}{2}\underset{i}{\max}\left\{\tr J_{i}\right\} ~\forall~ j>1 ~\iff~\lambda_{2}\geq \frac{1}{2}\underset{i}{\max}\left\{\tr J_{i}\right\}, ~i \in \{1,2,\dots,n\}$.
\end{enumerate}
must hold for the stability of the system (\ref{model}), where $0=\lambda_1\leq\lambda_2\leq\dots \leq \lambda_n $ is the spectrum of the Laplacian matrix $L$ of the graph $G$ induced by the patch network. The above conditions show that for the populations to be stable after the partition of the graph of the patch network, the Fiedler value (second smallest eigenvalue) of the Laplacian of each component must be greater than or equal to the threshold value (condition $2$) and both the eigenvalues of the Jacobian matrix of the reaction term must be complex number with positive real part as $\det J_{i}\geq 0,~\forall ~i$ (conditions $1$), implying that $\tr J_{i}\geq 0$ to have instability in the patch. Also from condition $2$, $\frac{1}{2}\tr J_{i}= \frac{1}{2} \times (\text{sum of positive real part eigenvalues of}~J_{i}) \leq \lambda_2$, it can be inferred that the local instability within a patch must not be greater than that of the algebraic connectivity \emph{i.e.,} $\lambda_2 (L)$ of the graph.

In ecological terms, sustainable separations of the patches are possible provided the populations dynamics inside each patch and populations' dispersal movement among patches are constrained by the above conditions.
The stability conditions depend upon dispersal movement of species (connectivity of the network) and the local populations dynamics. By construction, the dispersal movement of the species in the network is an important stability factor. The analysis thus shows that more the dispersal, the better the stability of the populations. A partition with higher Fiedler value in the components ensures persistence of stable metapopulations.

Stability condition infers that we do not have to require the model's parameters such as growth and mortality rate of the populations. The algebraic connectivity of the graph (dispersal parameters) is enough to conclude about the populations stability in metapopulations network or components.

There are many easily computable lower and upper bounds for the $\lambda_2(L(G))$ in the existing literature \cite{Merris94,Anderson85,Hahn97}. Some of these bounds of $\lambda_2(L(G))$ are listed in the Table~\ref{bounds_t}, where $\delta(G)$ is minimum sum of the weights of edges that are incident on a node;  $E(S,\overline{S}), ~ S\subseteq V(G)$, is the weight on edge cut, $D$ is the diameter of the graph, $\overline{d}$ is the mean distance, and $d_{i}$ is sum of weights of edges that are incident on node $i$. If  $\tau$ is compared with those bounds (see Theorem \ref{thm_ub} and Theorem \ref{thm_lb} below), then the bounds provide an easy way to an otherwise hard to check populations stability on the graph of the patch network.
Table ~\ref{SpecialGraph_t} lists the Fiedler values of some graphs with edges having unit weight.

\begin{theorem}
\label{thm_ub}
If $\tau$ is greater than the upper bound of $\lambda_2(L(C))$, then the partitioned component $C$ is unstable.
\end{theorem}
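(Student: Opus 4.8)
The plan is to read the theorem as the direct contrapositive of the necessary stability condition~2 extracted from~(\ref{stabilityEqn}). Recall that the linearized analysis shows that for the populations on a component $C$ to be stable, its Fiedler value must satisfy $\lambda_2(L(C)) \geq \tau$, where $\tau := \frac{1}{2}\underset{i}{\max}\{\tr J_i\}$ is the threshold assembled from the local Jacobians $J_i$. Since this is a \emph{necessary} condition for stability, it suffices to exhibit its failure: once we show $\lambda_2(L(C)) < \tau$, stability is precluded and $C$ must be unstable. So the whole proof reduces to deriving that strict inequality from the hypothesis.

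First I would fix notation: let $U$ denote the chosen upper bound for $\lambda_2(L(C))$, that is, any one of the closed-form estimates collected in Table~\ref{bounds_t}, applied to the Laplacian of the component $C$. By the defining property of an upper bound, $\lambda_2(L(C)) \leq U$ holds unconditionally. The hypothesis of the theorem supplies $\tau > U$. Chaining these two facts by transitivity gives
\begin{equation*}
\lambda_2(L(C)) \leq U < \tau ,
\end{equation*}
so that $\lambda_2(L(C)) < \tau$.

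This strict inequality is exactly the negation of the necessary stability condition~2, hence the component $C$ cannot sustain a stable population distribution and is therefore unstable, which is the claim. There is no genuine analytic obstacle: once condition~2 is granted as a necessary condition (already established from the sign structure of the characteristic polynomial~(\ref{stabilityEqn})), the argument is a one-line transitivity chain. The only points requiring care are logical rather than computational. One must invoke that condition~2 is \emph{necessary} for stability, so that its violation forces instability, rather than merely sufficient; and one must ensure that the bound $U$ being compared against $\tau$ is a valid upper bound for $\lambda_2$ computed on the component $C$ itself, not on the original graph $G$, since partitioning generally lowers the Fiedler value and the bound must track that of the actual post-cut component.
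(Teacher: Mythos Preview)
Your proposal is correct and matches the paper's intent. In fact the paper states Theorem~\ref{thm_ub} without any explicit proof, treating it as an immediate corollary of the preceding derivation of stability condition~2 from~(\ref{stabilityEqn}); your transitivity chain $\lambda_2(L(C)) \leq U < \tau$ followed by the contrapositive of that necessary condition is exactly the reasoning the reader is meant to supply.
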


\begin{theorem}
\label{thm_lb}
If $\tau$ is less than the lower bound of $\lambda_2(L(C))$, then the partitioned component $C$  is stable.
\end{theorem}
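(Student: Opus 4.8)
The plan is to read both statements as one-line consequences of the stability criterion extracted from (\ref{stabilityEqn}). Write $\tau := \tfrac{1}{2}\max_i\{\tr J_i\}$ for the threshold appearing in condition $2$. Restricting the dynamics to the component $C$, the populations on $C$ are stable if and only if condition $1$ holds on every patch of $C$ and the Fiedler value satisfies $\lambda_2(L(C)) \geq \tau$. Both theorems keep condition $1$ fixed and merely transfer the comparison of $\lambda_2(L(C))$ against $\tau$ through a tabulated bound, so that stability can be decided without ever computing $\lambda_2(L(C))$ itself.

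For Theorem~\ref{thm_ub} I would let $U$ be the upper bound of $\lambda_2(L(C))$ taken from Table~\ref{bounds_t}, so that $\lambda_2(L(C)) \leq U$ by the defining property of an upper bound. The hypothesis $\tau > U$ then chains to $\lambda_2(L(C)) \leq U < \tau$, hence $\lambda_2(L(C)) < \tau$. This is exactly the negation of condition $2$: for the mode $j$ with $\lambda_j = \lambda_2$ and the patch $i$ attaining $\max_i \tr J_i$, the coefficient $\tr J_i - 2\lambda_2 > 0$, so the roots of (\ref{stabilityEqn}) have positive real-part sum and at least one root $\sigma_j$ lies in the right half-plane. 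Therefore $C$ is unstable.

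Theorem~\ref{thm_lb} is the symmetric case. Let $\ell$ be the lower bound of $\lambda_2(L(C))$, so $\lambda_2(L(C)) \geq \ell$. The hypothesis $\tau < \ell$ gives $\lambda_2(L(C)) \geq \ell > \tau$, so condition $2$ holds; combined with condition $1$ this forces both roots of (\ref{stabilityEqn}) into the left half-plane for every mode, and $C$ is stable.

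Since each argument is a short chain of inequalities, there is no genuine obstacle, only two points of care. First, I would state explicitly that $\tau$ is the threshold $\tfrac{1}{2}\max_i\{\tr J_i\}$ so that the comparison against the tabulated bounds is well-posed, and I would track the strictness of the inequalities: condition $2$ uses $\geq$, so it is precisely the strict hypotheses $\tau > U$ and $\tau < \ell$ that yield the clean conclusions. Second, I would emphasise that both results concern condition $2$ in isolation and are stated under the standing assumption that condition $1$ holds on $C$; this assumption is inherited automatically on passing from the full graph $G$ to a component $C$, since the local Jacobians $J_i$ and hence their traces and determinants are unaffected by deleting edges in the partition.
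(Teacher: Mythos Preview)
Your argument is correct and is exactly the intended one: the paper states Theorems~\ref{thm_ub} and~\ref{thm_lb} without proof, treating them as immediate corollaries of the stability condition $\lambda_2(L(C))\geq\tau$, so your chain of inequalities $\lambda_2(L(C))\geq\ell>\tau$ (respectively $\lambda_2(L(C))\leq U<\tau$) is precisely what the paper leaves implicit. Your added care about strictness and about condition~1 being preserved under edge deletion is sound and in fact more explicit than the paper itself.
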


\begin{table}
\begin{center}
\begin{tabular}{|c|c|c|c|c|}
\hline
\textbf{Lower Bound} &$\frac{4}{nD}$ & $\frac{4}{(2(n-1)\overline{d}-n+2)}$ & $\underset{i\sim j}{\max} (d_i + d_j)-(n-2)$ \\
\hline
\textbf{Upper Bound }& $\frac{1}{2}\left(\underset{k\in V}\sum a_{ik} +\underset{l\in V}{\sum}a_{jl}\right),$ & $\frac{n}{n-1}\delta(G)$ & $n\frac{E(S,\overline{S})}{|S|(n-|S|)}$ \\
&$i~ \text{and}~ j $ are non-adjacent vertices & & \\
\hline
\end{tabular}
\caption{Various bounds for second-smallest eigenvalue of Laplacian matrix of a graph $G$.}
\label{bounds_t}
\end{center}
\end{table}

\begin{table}
\begin{center}
\begin{tabular}{|c|c|c|c|c|c|c|}
\hline
\textbf{Graph (G)} & Path & Cycle & Cube & Complete & Star & Tree\\
\hline
\textbf{$\lambda_2(G)$} &$2(1-\cos(\frac{\pi}{n}))$&$2(1-\cos(\frac{2\pi}{n}))$&$2$& $n$ & $1$& $\leq 1$\\
\hline
\end{tabular}
\caption{Second-smallest eigenvalue of Laplacian matrix of special graphs.}
\label{SpecialGraph_t}
\end{center}
\end{table}

\begin{theorem}
\label{thm_edgeremoval1}
(\textbf{Necessary Condition for Removal of Some Edges})
Let $G(V,E)$ be a connected and undirected graph with $|V|=n (\geq 3)$. Let $G'$ be a stable subgraph of $G$ obtained by removing some edges. Then the average weight of edges incident on a node of $G'$ i.e., $\langle k\rangle \geq \left(\frac{n-1}{n}\right)\tau$.
\end{theorem}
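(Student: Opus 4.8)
The plan is to bridge the graph-theoretic quantity $\langle k\rangle$ and the spectral stability criterion through the trace of the Laplacian of $G'$. First I would recall that if $d_i$ denotes the weighted degree (the sum of weights of edges incident on node $i$) in $G'$, then by definition $\langle k\rangle = \frac{1}{n}\sum_{i=1}^{n} d_i$, while the diagonal entries of the Laplacian are precisely these $d_i$, so $\sum_{i=1}^{n} d_i = \tr L(G')$. Since the trace of any matrix equals the sum of its eigenvalues, this yields $n\langle k\rangle = \tr L(G') = \sum_{j=1}^{n}\lambda_j$, where $0=\lambda_1\leq\lambda_2\leq\dots\leq\lambda_n$ is the Laplacian spectrum of $G'$.

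Next I would exploit the ordering of the spectrum. Discarding the zero eigenvalue $\lambda_1$ and bounding each of the remaining $n-1$ eigenvalues below by the Fiedler value $\lambda_2$ gives $\sum_{j=1}^{n}\lambda_j = \sum_{j=2}^{n}\lambda_j \geq (n-1)\lambda_2$, hence $n\langle k\rangle\geq (n-1)\lambda_2$. Finally, because $G'$ is assumed stable, the second stability condition derived from (\ref{stabilityEqn}) forces $\lambda_2 = \lambda_2(L(G')) \geq \tau$. Chaining these two inequalities produces $n\langle k\rangle \geq (n-1)\lambda_2 \geq (n-1)\tau$, and dividing through by $n$ gives the asserted bound $\langle k\rangle \geq \left(\frac{n-1}{n}\right)\tau$.

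Since the argument is a short chain of inequalities, I do not expect a serious technical obstacle; the step requiring the most care is the bookkeeping of the vertex set. One must confirm that $G'$ retains all $n$ vertices of $G$ (so that the trace is summed over exactly $n$ diagonal entries, matching the denominator in $\langle k\rangle$) and that $G'$ is connected, for otherwise $\lambda_2(L(G'))=0$ and the stability hypothesis could not hold. Both points are consistent with the standing assumption that $G'$ is a \emph{stable} subgraph: stability compels $\lambda_2>0$, which in turn guarantees connectedness, so the resulting lower bound $\langle k\rangle\geq\frac{n-1}{n}\tau$ is nonvacuous.
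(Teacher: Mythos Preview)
Your proof is correct and follows essentially the same route as the paper: identify $\sum_i d_i$ with $\tr L(G')$, rewrite the trace as $\sum_j \lambda_j$, drop $\lambda_1=0$, bound the remaining $n-1$ eigenvalues below by $\lambda_2\geq\tau$, and divide by $n$. The paper compresses these steps into a single displayed inequality, whereas you spell out the intermediate bound $(n-1)\lambda_2$ and the bookkeeping about connectedness, but the argument is the same.
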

\begin{proof}
We know that $L_{ii}=d_i=\underset{i\sim j}{\sum}w_{ij}$ is  the  sum  of  the  weights  of  edges  incident  on  node $i$. Thus $\sum_{i=1}^{n}L_{ii}=\sum_{i=1}^{n}d_i$, implies that $$\sum_{i=1}^{n}d_i=\tr L =\lambda_1+\dots+\lambda_n \geq (n-1)\tau,$$ from which it follows that the
average weight of the edges incident on any node must be greater than or equal to $\frac{n-1}{n} \tau$.
\end{proof}

From the stability conditions, it is clear that if the difference between the Fiedler values of the graph Laplacian $L(G)$ and edge-cut induced graph component Laplacian $L(C)$ is not more than the difference between the Fiedler values of $L(G)$ and $\tau$, then the populations in the graph component $C$ will be stable. Although the interlacing theorem places the Laplacian eigenvalues after removal of an edge in a graph in between the two consecutive Laplacian eigenvalues of the original graph, it is hard to estimate the exact difference between these eigenvalues. As a special case, we consider the following:

\begin{theorem}
\label{thm_edgeremoval}
(\textbf{Sufficient Condition for Removal of an Edge}) Let $G(V,E)$ be a connected and undirected graph with $|V|=n (\geq3)$, and let $i$ and $j$ be two adjacent nodes. Let $\varepsilon_i$ be a unit vector in $\mathbb{R}^n$, whose $i$-th component is $1$ and $0$ otherwise. The difference between the $i$-th and $j$-th columns of the Laplacian matrix $L(G)$ of $G$ is a non-zero constant multiple of the vector $\mathbf{v}=\varepsilon_i-\varepsilon_j$
which is an eigenvector of $L(G)$ corresponding to an eigenvalue $\lambda \neq 0$. Then the resultant subgraph $G'$ obtained
by deleting the edge of weight $w_{ij}\neq \lambda$ that links $i$-th and $j$-th node of graph $G$ (which is a rank-one perturbation in a Laplacian matrix of graph $G$), is stable provided $w_{ij}\leq \frac{\lambda(L(G))-\tau}{\mathbf{v}^\top \mathbf{v}}=\frac{\lambda(L(G))-\tau}{2}$.\\

\end{theorem}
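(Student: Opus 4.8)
The plan is to recognise the deletion of the edge $\{i,j\}$ as an explicit rank-one perturbation of the Laplacian that is aligned with the prescribed eigenvector, so that the entire spectrum of $L(G')$ can be tracked exactly rather than merely bounded through interlacing. First I would record how $L(G)$ changes: removing the edge of weight $w_{ij}$ lowers the diagonal entries $(i,i)$ and $(j,j)$ by $w_{ij}$ and raises the off-diagonal entries $(i,j)$ and $(j,i)$ from $-w_{ij}$ to $0$, leaving every other entry untouched. Collecting these four changes yields
\begin{equation*}
L(G') = L(G) - w_{ij}\,\mathbf{v}\mathbf{v}^\top, \qquad \mathbf{v}=\varepsilon_i-\varepsilon_j,
\end{equation*}
the rank-one perturbation announced in the statement, for which $\mathbf{v}^\top\mathbf{v}=2$.

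Next I would exploit the hypothesis $L(G)\mathbf{v}=\lambda\mathbf{v}$ with $\lambda\neq0$. Multiplying the perturbation identity on the right by $\mathbf{v}$ gives $L(G')\mathbf{v}=\lambda\mathbf{v}-w_{ij}(\mathbf{v}^\top\mathbf{v})\mathbf{v}=(\lambda-2w_{ij})\mathbf{v}$, so $\mathbf{v}$ survives as an eigenvector of $L(G')$, now for the eigenvalue $\lambda-2w_{ij}$. Because $L(G)$ is symmetric it admits an orthonormal eigenbasis containing $\mathbf{v}$; for every basis vector $\mathbf{u}\perp\mathbf{v}$ the correction vanishes, since $L(G')\mathbf{u}=L(G)\mathbf{u}-w_{ij}\mathbf{v}(\mathbf{v}^\top\mathbf{u})=L(G)\mathbf{u}$, so such $\mathbf{u}$ remains an eigenvector with its original eigenvalue. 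Hence the spectrum of $L(G')$ is obtained from that of $L(G)$ by replacing the single eigenvalue $\lambda$ with $\lambda-2w_{ij}$ and leaving all others fixed, including the simple zero eigenvalue carried by the all-ones vector (orthogonal to $\mathbf{v}$ because the entries of $\mathbf{v}$ sum to zero).

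Finally I would feed this exact spectral picture into stability condition $2$, under which a network is stable precisely when every nonzero Laplacian eigenvalue is at least $\tau$. Taking $G$ stable, as in the preceding discussion of edge removal, every eigenvalue that is left unchanged already clears this threshold, so the only eigenvalue that could spoil stability is the shifted one. The assumed bound $w_{ij}\leq(\lambda-\tau)/\mathbf{v}^\top\mathbf{v}=(\lambda-\tau)/2$ is equivalent to $\lambda-2w_{ij}\geq\tau>0$, which both keeps this eigenvalue above the threshold and keeps it strictly positive, so $0$ remains the unique smallest eigenvalue and the Fiedler value $\lambda_2(L(G'))$ is at least $\tau$; thus $G'$ is stable. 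The substantive step, and the one I expect to be delicate, is the alignment argument of the second paragraph: it is precisely the eigenvector hypothesis on $\mathbf{v}=\varepsilon_i-\varepsilon_j$ that converts a generic rank-one perturbation (where interlacing would only sandwich the new eigenvalues loosely between consecutive old ones) into one that moves a single eigenvalue by the computable amount $2w_{ij}$, and I would be careful, when $\lambda$ is a repeated eigenvalue, to choose the eigenbasis of its eigenspace so that $\mathbf{v}$ itself is one of the basis vectors.
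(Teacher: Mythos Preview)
Your proposal is correct and follows essentially the same approach as the paper: write the edge deletion as the rank-one update $L(G')=L(G)-w_{ij}\mathbf{v}\mathbf{v}^\top$, observe that $\mathbf{v}$ remains an eigenvector with eigenvalue shifted to $\lambda-2w_{ij}$, and conclude that no other eigenvalue moves so that stability reduces to $\lambda-2w_{ij}\ge\tau$. The only difference is that the paper invokes an external result (Theorem~2.1 of \cite{Ding07}) to assert that the remaining eigenvalues are unchanged, whereas you supply the elementary orthogonality argument directly and add the extra care about repeated eigenvalues and about $\lambda-2w_{ij}>0$ keeping zero as the unique smallest eigenvalue; your version is thus more self-contained but not a different route.
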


\begin{proof}
We know that change in labeling of nodes does not affect the properties of $L$, hence we relabel the node $i$ and $j$ as node $1$ and $2$ respectively.
The Laplacian matrix of a subgraph $G'$ that induced from a graph $G$ by deleting an edge is a symmetric rank-one updated Laplacian matrix of graph $G$. That can be written as follows,
\begin{eqnarray}
L(G')&=&L(G)-
\begin{bmatrix}
w_{12} &-w_{12}&0     &\dots &0\\
-w_{12}&w_{12} &0     &\dots &0\\
\vdots &\vdots &\vdots&\dots &\vdots\\
0      &0     &0     &\dots &0
\end{bmatrix}\nonumber \\
&=&L(G)-w_{12}\mathbf{v}\mathbf{v}^T,
\label{eqn}
\end{eqnarray}
where $\mathbf{v}\mathbf{v}^\top =(\varepsilon_1-\varepsilon_2)(\varepsilon_1-\varepsilon_2)^{\top}$ is a symmetric rank-one perturbation matrix with entries $a_{11}=a_{22}=1, a_{12}=a_{21}=-1$ and rest of the entries are zero. $w_{12}$ is the weight of the edge that connects nodes $1$ and $2$. From equation (\ref{eqn}) it is easy to see that $\mathbf{v}$ is an eigenvector of $L(G')$ and the corresponding eigenvalue is $\lambda-w_{12}\mathbf{v}^T\mathbf{v}$. Thus $\lambda$ is reduced by the $w_{12}\mathbf{v}^T\mathbf{v}$ and by Theorem 2.1 of \cite{Ding07} the rest of the eigenvalues do not change. For stability of the subgraph $G'$, we then require $\lambda(L(G)) - w_{12}\mathbf{v}^T\mathbf{v}\geq \tau$.
\end{proof}

For stability, Theorem \ref{thm_edgeremoval} provides an upper bound on the weight of the edge which to be removed from a graph when the graph Laplacian matrix has the special form of eigenvectors. One example of such graph with its Laplacian having eigenvectors of the form $\varepsilon_i-\varepsilon_j, ~1\leq i \leq n,~ i\neq j$, is a complete graph with equal weight on its edges. Figure (\ref{ss}) shows a complete graph $K_4$ with all edges having equal weight $1$ and its only non-zero eigenvalue is $4$ with multiplicity $3$ and the corresponding eigenvectors are in the form of $(0,-1,0,1)=\varepsilon_4-\varepsilon_2,~(0,-1,1,0)=\varepsilon_3-\varepsilon_2,~(1,-1,0,0)=\varepsilon_1-\varepsilon_2$ and their linear combinations. By Theorem \ref{thm_edgeremoval}, we can delete any edge of the graph $K_4$ safely as long as $\tau \leq 2$.

Also note that for $L(G)$ to have an eigenvector in the form of $\varepsilon_i-\varepsilon_j$, the entry $a_{ii}$ and $a_{jj}$ in $L(G)$ must be equal. This information is helpful in deciding whether the matrix has an eigenvector in the desired form or not. If $a_{ii}\neq a_{jj}$ for any pair $(i,j), ~i \neq j$, then the matrix can not have the eigenvectors of the form $\varepsilon_i-\varepsilon_j$, and if $a_{ii}=a_{jj}$, then we check only the difference between the $i$-th and $j$-th columns.
Theorem \ref{thm_edgeremoval} can be generalized as follows by making use of Theorem 3.1 from \cite{Jding07}.

\begin{theorem}
\label{thm_redgeremoval}
(\textbf{Sufficient Condition for Removal of $r$ Edges}) Let $G(V,E)$ be a connected and undirected graph with $|V|=n (\geq3)$,
and let there be $r$ edges $e(i_k,j_k),~1\leq k\leq r$ that link nodes $i_k$ and $j_k$, respectively. Let $\{\mathbf{v}_k=\varepsilon_{i_k}-\varepsilon_{j_k},~ i_k\neq j_k, ~1\leq k\leq r\}$ be a list of eigenvectors of $L(G)$ that correspond to the  eigenvalues $\lambda_k \neq 0, 1\leq k \leq r$. Then the subgraph $G'$ obtained by deleting the $r$ edges $e(i_k,j_k),~1\leq k\leq r$ of weights $w_{i_{k} j_{k}}\neq \lambda_k$ yields the Laplacian $L(G')$ which is a rank-$r$ perturbation to the Laplacian matrix of the graph $G$ \emph{i.e.,} $L(G')=L(G)-\sum_{k=1}^{r}w_{i_{k} j_{k}}\mathbf{v}_{k}\mathbf{v}_{k}^{\top}$, and subgraph $G'$ is stable if
$$\min\{\upsilon_1, \upsilon_2, \dots, \upsilon_r\}\geq \tau,$$
where $\upsilon_k, 1\leq k\leq r$ are eigenvalues of $r\times r$ matrix $\diag{\lambda_1,\dots, \lambda_r}+(w_{i_{1} j_{1}}\mathbf{v}_1^\top,\dots, w_{i_{r} j_{r}}\mathbf{v}_r^\top) (\mathbf{v}_1,\dots, \mathbf{v}_r).$
\end{theorem}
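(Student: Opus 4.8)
The plan is to mirror the rank-one argument of Theorem~\ref{thm_edgeremoval}, but now at rank $r$, and to recover the full spectrum of $L(G')$ by splitting $\mathbb{R}^n$ into the subspace $\mathcal{S} := \operatorname{span}\{\mathbf{v}_1, \ldots, \mathbf{v}_r\}$ on which the perturbation acts and its orthogonal complement on which it is invisible. First I would record that deleting the $r$ distinct edges subtracts from the Laplacian exactly one rank-one term per edge, so that $L(G') = L(G) - \sum_{k=1}^r w_{i_k j_k}\mathbf{v}_k\mathbf{v}_k^\top$, as asserted; this is just the additive, edge-wise identity already used for a single edge in~(\ref{eqn}).

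Next I would exploit that every $\mathbf{v}_k$ is an eigenvector of $L(G)$. This makes $\mathcal{S}$ invariant under $L(G)$, while the perturbation $P := \sum_k w_{i_k j_k}\mathbf{v}_k\mathbf{v}_k^\top$ has range inside $\mathcal{S}$ and annihilates $\mathcal{S}^\perp$. Hence both $\mathcal{S}$ and $\mathcal{S}^\perp$ are invariant under $L(G') = L(G) - P$, and the spectrum of $L(G')$ is the disjoint union of the spectrum of $L(G)$ restricted to $\mathcal{S}^\perp$ (unchanged, since $P$ vanishes there) and the spectrum of $(L(G)-P)|_{\mathcal{S}}$. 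Because each $\mathbf{v}_k = \varepsilon_{i_k}-\varepsilon_{j_k}$ is orthogonal to the constant vector $\mathbf{1}$, we have $\mathbf{1}\in\mathcal{S}^\perp$, so the simple zero eigenvalue of $L(G)$ survives unperturbed.

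For the $\mathcal{S}$-block I would invoke Theorem~3.1 of~\cite{Jding07}: writing a candidate eigenvector as $\mathbf{x}=V\mathbf{c}$ with $V=(\mathbf{v}_1,\ldots,\mathbf{v}_r)$, the relation $L\mathbf{v}_k=\lambda_k\mathbf{v}_k$ turns $L|_{\mathcal{S}}$ into $\operatorname{diag}(\lambda_1,\ldots,\lambda_r)$ in the (generally non-orthonormal) basis $\{\mathbf{v}_k\}$, while $P$ contributes the Gram-type block built from $V^\top V$ and the weights $W=\operatorname{diag}(w_{i_1 j_1},\ldots,w_{i_r j_r})$, so that the $r$ perturbed eigenvalues are exactly the eigenvalues $\upsilon_1,\ldots,\upsilon_r$ of the stated $r\times r$ matrix. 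These $\upsilon_k$ are real because they are the eigenvalues of the self-adjoint operator $L(G')$ restricted to the invariant subspace $\mathcal{S}$; the nondegeneracy hypothesis $w_{i_k j_k}\neq\lambda_k$ is the feasibility condition inherited from the cited theorem.

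Finally I would assemble the stability conclusion. Since the ambient network $G$ is, by the standing assumption of the paper, stable, we have $\lambda_2(L(G))\geq\tau$, so every nonzero eigenvalue of $L(G)$—in particular every one surviving on $\mathcal{S}^\perp$—is already $\geq\tau$, while the unique zero eigenvalue is preserved. Thus the only eigenvalues of $L(G')$ that can drop below $\tau$ are the $\upsilon_k$, and imposing $\min_k \upsilon_k\geq\tau$ forces every nonzero eigenvalue of $L(G')$ to be $\geq\tau$; since $\tau>0$ this also rules out a second zero eigenvalue, so $G'$ stays connected and $\lambda_2(L(G'))\geq\tau$, which is exactly stability. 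The main obstacle is the $\mathcal{S}$-block analysis: because the edge vectors $\mathbf{v}_k$ need be neither orthogonal nor linearly independent, the change of basis produces a non-symmetric $r\times r$ matrix, and care is needed to argue—via the self-adjointness of the restriction, equivalently a $\Gamma^{1/2}$-symmetrization with $\Gamma=V^\top V$—that its eigenvalues are real and faithfully account for the perturbed part of the spectrum without double-counting in the presence of dependence or repeated $\lambda_k$; this is precisely where the rank-$r$ perturbation result of~\cite{Jding07} does the heavy lifting.
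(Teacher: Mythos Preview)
Your proposal is correct and is essentially the approach the paper intends: the paper itself gives no proof of Theorem~\ref{thm_redgeremoval} beyond the one-line remark that it ``can be generalized \ldots\ by making use of Theorem~3.1 from~\cite{Jding07},'' and your argument is precisely the natural execution of that remark---the edge-wise rank-$r$ Laplacian identity, the invariant-subspace split $\mathcal{S}\oplus\mathcal{S}^\perp$, the appeal to~\cite{Jding07} for the $r\times r$ block, and the assembly of the stability inequality $\lambda_2(L(G'))\geq\tau$ from the standing hypothesis $\lambda_2(L(G))\geq\tau$. In fact you supply more detail than the paper (the observation $\mathbf{1}\in\mathcal{S}^\perp$, the connectedness of $G'$ via $\tau>0$, and the caveat about possible dependence among the $\mathbf{v}_k$), all of which is sound.
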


In Theorem \ref{thm_redgeremoval}, if all the $r$ edges are non-adjacent, then all the pairs of the nodes $(i_k,j_k), ~1\leq k \leq r$ are distinct. That ensure the the list $\{\mathbf{v}_k,~1\leq k \leq r\}$ is a set of orthogonal eigenvectors of $L$. The list of eigenvalues of the rank-$r$ updated matrix \emph{i.e.,} $L(G)-\sum_{k=1}^{r}w_{i_{k} j_{k}}\mathbf{v}_{k}\mathbf{v}_{k}^{\top}$ becomes $\{\lambda_1-2w_{i_{1} j_{1}}, \lambda_2-2w_{i_{2} j_{2}} \dots \lambda_r-2w_{i_{r} j_{r}}, \lambda_{r+1}, \dots, \lambda_n\}$. Thus stability of populations in the graph $G'$ then simply requires  $\underset{k}{\min}\{\lambda_k-2w_{i_{k} j_{k}}, ~1\leq k\leq r\} \geq \tau$.

For particular cases, we can apply two principles (see below) by Merris \cite{Merris98} for deciding the candidate edge to be deleted from the graph safely. We consider an eigenvector corresponding to the eigenvalue $\lambda_2$. Figure \ref{sss} is obtained from Figure \ref{ss} by deleting the edge $e(2,3)$ at the cost of reduction in $\lambda_2(L(K_4))$ by $2$, as $x(2)=-x(3)$ in eigenvector $\mathbf{v}_2$. Similarly, Figure \ref{ssss} is obtained from Figure \ref{sss} by deleting the edge $e(1,4)$ without any change in the $\lambda_2(L)$ as $x(1) =x(4)$ in $\mathbf{v}_2$.\\
~\\
\textbf{Edge Principle:} \emph{Let $G$ be a graph, and $\mathbf{x}$ an eigenvector of $L(G)$ corresponding to the eigenvalue $\lambda$ such that $x(u) = x(v)$ for some adjacent nodes $u$ and $v$. Let $G'$ be the graph obtained by removing the edge $e(u,v)$. Then $\mathbf{x}$ is an eigenvector of $G'$ corresponding to the eigenvalue $\lambda$.}\\
~\\
\textbf{Alternating Principle:}\emph{ Let $G$ be a graph, and $\mathbf{x}$ an eigenvector of $L(G)$ corresponding to the eigenvalue $\lambda$. Let the adjacent vertices $i$ and $j$ of $G$ be such that $x(i)=-x(j) (\neq 0)$. Let $G'$ be the graph obtained by deleting the edges between all such paired vertices $i$ and $j$. Then $\mathbf{x}$ is an eigenvector of $G'$ corresponding to the eigenvalue $\lambda -2$.
}

\begin{figure}
\centering
\subfigure[$\lambda_2(L)$~=~4, ~$v_2$~=~(0,-1,1,0).]
{
\begin{pspicture}(0,-1)(4,4)
\psline[showpoints=true](0.2,0.2)(2,3)
\psline[showpoints=true](2,3)(2,1.5)
\psline[showpoints=true](2,1.5)(0.2,0.2)
\psline[showpoints=true](2,1.5)(3.8,0.2)
\psline[showpoints=true](3.8, 0.2)(2,3)
\psline[showpoints=true](3.8,0.2)(0.2,0.2)
\rput(0,0){$1$}
\rput(4,0){$2$}
\rput(2,3.3){$3$}
\rput(2,1.2){$4$}
\end{pspicture}
\label{ss}
}
\hspace{0.9cm}
\subfigure[$\lambda_2(L)$~=~2,~$v_2$~=~(0,-1,1,0).]
{
\begin{pspicture}(0,-1)(4,4)
\psline[showpoints=true](0.2,0.2)(2,3)
\psline[showpoints=true](2,3)(2,1.5)
\psline[showpoints=true](2,1.5)(0.2,0.2)
\psline[showpoints=true](2,1.5)(3.8,0.2)
\psline[showpoints=true](3.8,0.2)(0.2,0.2)
\rput(0,0){$1$}
\rput(4,0){$2$}
\rput(2,3.3){$3$}
\rput(2,1.2){$4$}
\end{pspicture}
\label{sss}
}
\hspace{0.9cm}
\subfigure[$\lambda_2(L)$~=~2,~$v_2$~=~(0,-1,1,0).]
{
\begin{pspicture}(0,-1)(4,4)
\psline[showpoints=true](0.2,0.2)(2,3)
\psline[showpoints=true](2,3)(2,1.5)
\psline[showpoints=true](2,1.5)(3.8,0.2)
\psline[showpoints=true](3.8,0.2)(0.2,0.2)
\rput(0,0){$1$}
\rput(4,0){$2$}
\rput(2,3.3){$3$}
\rput(2,1.2){$4$}
\end{pspicture}
\label{ssss}
}
\caption{Figure shows the edge deletion by using Fiedler vector's component values and then its impact on Fiedler value.}
\end{figure}
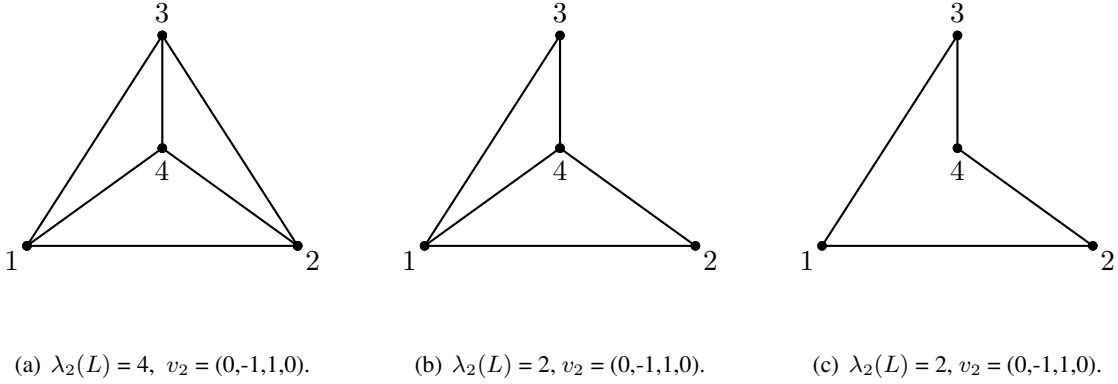

\section{Exhaustive Partitioning}
In this section we present a method exploring all possible partitions such that the stability conditions are satisfied by the graph components after the cut.  Some preliminaries \cite{Deo74} related to this procedure are as follows.
\begin{defn}
\textbf{Cut-Set} of a graph $G$ is a set of edges whose removal from $G$ leaves $G$ disconnected, provided no proper subset of these edges disconnects (in the same way) the graph.
\end{defn}
\begin{defn}
\textbf{Fundamental Cut-Set} with respect to spanning tree $T$, is a cut-set containing exactly one branch of spanning tree $T$.
\end{defn}
\begin{defn}
\textbf{Ring Sum} of two graphs $G_{1}(V_{1},E_{1})$ and $G_{2}(V_{2},E_{2})$ is a graph consisting of the vertex set $V_{1} \cup V_{2}$ and edges that are either in $G_{1}$ or $G_{2}$, but not in both.
\end{defn}
\textbf{Vector space associated with a graph}:
Let us consider the graph $G$ with edges $e_{1},e_{2},\cdots,e_{n}$. Any subset $g$ of these $n$ edges can be represented by a $n-$ tuple $\mathbf{X}=(x_{1},x_{2},\cdots,x_{n})$ such that $x_{i}=1$ if $e_{i}$ is in $g$ and $x_{i}=0$ otherwise.
There exists a vector space $W_{G}$ over Galois field modulo $2$ associated with every graph $G$, where vector addition is taken as ring sum of corresponding graphs, defined as $\mathbf{X} \oplus \mathbf{Y} =(x_{1}+y_{1},x_{2}+y_{2},\dots ,x_{n}+y_{n})$ and scalar multiplication defined as $c\cdot \mathbf{X} =(c\cdot x_{1},c \cdot x_{2},\dots c\cdot x_{n})$.
\begin{defn}
\textbf{Cut-Set Vector} is a vector in $W_{G}$ representing either a cut-set or a union of edge-disjoint cut-sets in $G$.
\end{defn}
\begin{defn}
\textbf{Rank of a graph} is the number of branches in any spanning tree of a connected graph $G$.
\end{defn}
\begin{enumerate}[]
\item Result \textbf{[a]}:  \emph{The set of all cut-set vectors in $W_{G}$ forms a subspace $W_{s}$.}
\item Result \textbf{[b]}:  \emph{The set of cut-set vectors corresponding to the set of fundamental cut-sets with respect to any spanning tree, forms a basis for the cut-subspace $W_{s}$}
\item Result \textbf{[c]}:  \emph{The dimension of the cut set subspace $W_{s}$ is equal to the rank $r$ of the graph, and the number of cut-set vectors (excluding $\mathbf{0}$) in $W_{s}$ ~is~ $2^{r}-1$.}
\item Result \textbf{[d]}:  \emph{The ring sum of any two cut-sets in a graph is a either a third cut-set or an edge-disjoint union of cut-sets.}
\end{enumerate}

With the definitions and results stated as above, we give the steps to obtain the desired partition of the graph such that each resultant component has stable populations:
\begin{itemize}
\item STEP~1: Determine all ~$2^{m-1}-1$~ cut-sets of a graph.
\begin{enumerate}
\item select a spanning tree $T$ of the given connected graph $G$.
\item determine all ~$n-1$~ fundamental cut-sets with respect to spanning tree $T$.
\item generate remaining all cut-sets by applying Result \textbf{[d]}, as Result \textbf{[b]} says that the set of fundamental cut-sets is the basis of cut-subspace $W_{S}$.
\end{enumerate}
\item STEP~2: Reject all the cut-sets which gives the isolated node as a component (as we want stable population in each component).\\
~\\
To decide whether a cut-set gives an isolated patch or not, we consider the vertices that are
connected by an edge corresponding to the first non-zero component in the cut-set vector.
If the first $1$ occurs at the $i$-th place in a cut-set vector, then this corresponds to
the edge $e_{i}$ that connects vertices
$v_{p}$ and $v_{q}$ (say). If all the edges that are incident on either $v_{p}$
or $v_{q}$ fall in the considered cut-set vector, then one or both the vertices are isolated
by the cut-set vector and we drop that cut-set from the list. If not, then we go to the next non-zero component in the
same cut-set vector and repeat the above procedure until all nodes are checked for isolation by this cut-set.
\item STEP~3: For the remaining cut-sets, check whether the second-smallest eigenvalue of the Laplacian of each component satisfies the stability conditions or not.
\item STEP~4: List all the cut-sets that give stable populations components.
\end{itemize}
If we minimize (maximize) the cost of cutting the edge(s), then we choose the cut-set which has the minimum (maximum) weight among all cut-sets obtained in STEP~4. This exhaustive procedure has exponential complexity which increases with the number of nodes in the graph, but does determine all the desired partitions. An
illustrative example follows.

\textbf{Example:} Consider the graph $G(5,6)$ with edge weights as shown in Figure \ref{graphEx}.
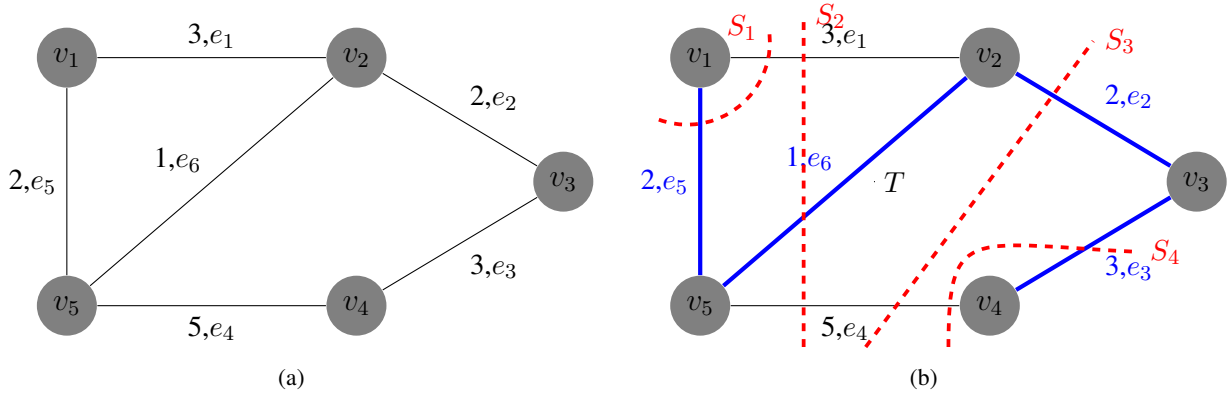
\begin{figure}
\centering
\subfigure[]
{
\begin{tikzpicture}[scale=0.55,node_style/.style={circle,fill=black!50!,font=\sffamily\large\bfseries},
   edge_style/.style={draw=black},auto]
    \node[node_style] (v1) at (-3,2) {$v_1$};
    \node[node_style] (v2) at (4,2) {$v_2$};
    \node[node_style] (v3) at (9,-1) {$v_3$};
    \node[node_style] (v4) at (4,-4) {$v_4$};
    \node[node_style] (v5) at (-3,-4) {$v_5$};
    \draw[edge_style]  (v1) edge node{3,$e_{1}$} (v2);
    \draw[edge_style]  (v2) edge node{2,$e_{2}$} (v3);
    \draw[edge_style]  (v3) edge node{3,$e_{3}$} (v4);
    \draw[edge_style]  (v4) edge node{5,$e_{4}$} (v5);
    \draw[edge_style]  (v5) edge node{2,$e_{5}$} (v1);
    \draw[edge_style]  (v5) edge node{1,$e_{6}$} (v2);
    \end{tikzpicture}
    \label{graphEx}
    }
\hfill
\subfigure[]
{
\begin{tikzpicture} [scale=0.55,node_style/.style={circle,fill=black!50!,font=\sffamily\large\bfseries},
   edge_style/.style={draw=black},auto]
    \node[node_style] (v1) at (-3,2) {$v_1$};
    \node[node_style] (v2) at (4,2) {$v_2$};
    \node[node_style] (v3) at (9,-1) {$v_3$};
    \node[node_style] (v4) at (4,-4) {$v_4$};
    \node[node_style] (v5) at (-3,-4) {$v_5$};
    \draw[edge_style]  (v1) edge node{3,$e_{1}$} (v2);
    \draw[edge_style][ultra thick,blue]  (v2) edge node{2,$e_{2}$} (v3);
    \draw[edge_style][ultra thick,blue]  (v3) edge node{3,$e_{3}$} (v4);
    \draw[edge_style]  (v4) edge node{5,$e_{4}$} (v5);
    \draw[edge_style][ultra thick, blue]  (v5) edge node{2,$e_{5}$} (v1);
    \draw[edge_style][ultra thick, blue]  (v5) edge node{1,$e_{6}$} (v2);
    \draw(1.2,-1)--(1.2,-1)node [right] {$T$};;
    \draw [line width=0.5mm, red, style=dashed ] (-4,0.5) arc (-110:10:2cm)node [left] {$S_{1}$};;
    \draw [line width=0.5mm, red, style=dashed ] (-0.5,-5) -- (-0.5,3) node [right] {$S_{2}$};;
    \draw [line width=0.5mm, red, style=dashed ] (1,-5) -- (6.5,2.4) node [right] {$S_{3}$};;
\draw [line width=0.5mm, red, style=dashed ] (3,-5) .. controls (3,-2) and (3.5,-2.5) .. (7.6,-2.7)node [right] {$S_{4}$};;
\end{tikzpicture}
\label{spanningtree}
}
\caption{Figure (a)~ is a weighted graph $G(5,6)$ and (b) shows a spanning tree $T$ and the fundamental cut-sets w.r.t. it.}
\end{figure}
\begin{itemize}
\item STEP~1: In order to determine all the cut-sets, first we determine the basis elements of the cut-subspace $W_{s}$ of $W_{G}$, which is the set
of fundamental cut-sets with respect to a spanning tree. There are four fundamental cut-sets of the given graph (shown in red dashed lines in Figure \ref{spanningtree}) w.r.t. the spanning tree $T$ (highlighted in blue solid line). Thus a basis of cut-subspace $W_{s}$ is given by $\mathcal{B}=\{S_{1},S_{2},S_{3},S_{4}\}$, where\\
~\\
\begin{tabular}{lllll}
$S_{1}=(1,0,0,0,1,0)^{\top},$ & $S_{2}=(1,0,0,1,0,1)^{\top},$ & $S_{3}=(0,1,0,1,0,0)^{\top},$ & $S_{4}=(0,0,1,1,0,0)^{\top}.$ &~\\
\end{tabular}\\
~\\
Now remaining cut-sets can be generated by taking ring sum (modulo $2$) of these four fundamental cut-sets. The total number of cut-sets generated by fundamental cut-sets is equal to the number of ways ring-sums of fundamental cut-sets can be taken, which is equal to $^4C_2 + ^4C_3 + ^4C_4=11$ and these are given as follows\\
~\\
\begin{tabular}{ll}
1.~~$S_{1} \bigoplus S_{2}=(0,0,0,1,1,1)^{\top},$ &
2.~~$S_{1} \bigoplus S_{3}=(1,1,0,1,1,0)^{\top},$\\
&\\
3.~~$S_{1} \bigoplus S_{4}=(1,0,1,1,1,0)^{\top},$ &
4.~~$S_{2}\bigoplus S_{3}=(1,1,0,0,0,1)^{\top},$\\
&\\
5.~~$S_{2}\bigoplus S_{4}=(1,0,1,0,0,1)^{\top}$, &
6.~~$S_{3}\bigoplus S_{4} =(0,1,1,0,0,0)^{\top},$\\
&\\
7.~~$(S_{1} \bigoplus S_{2})\bigoplus S_{3}=(0,1,0,0,1,1)^{\top}$,&
8.~~$(S_{1} \bigoplus S_{2})\bigoplus S_{4}=(0,0,1,0,1,1)^{\top}$,\\
&\\
9.~~$(S_{1} \bigoplus S_{3})\bigoplus S_{4}=(1,1,1,0,1,0)^{\top},$&
10.~~$(S_{2}\bigoplus S_{3})\bigoplus S_{4}=(1,1,1,1,0,1)^{\top},$\\
&\\
11.~~$(S_{1}\bigoplus S_{2})\bigoplus (S_{3} \bigoplus S_{4})=(0,1,1,1,1,1)^{\top}.$\\
\end{tabular}

~\\
All the cut-sets and components of the graph induced by them are shown in Table 3 (see appendix).\\
\item STEP~2:
Consider the cut set $S_{1}$, first $1$ is at first place, which is associated with the edge
$e_1$ and vertices $v_{1}$ and $v_{5}$ are connected by $e_1$. Note that $e_5$ is the only edge other than $e_1$ that is incident on
vertex $v_{1}$ and it is also in the cut-set $S_1$. Hence cut-set $S_{1}$ separates the vertex $v_{1}$ from the remaining vertices. Hence we drop the cut-set $S_{1}$ from the list of our potential cut-sets. Similarly we reject the cut-sets $S_4,S_{1} \bigoplus S_{2},S_{1} \bigoplus S_{3},$ $S_{1} \bigoplus S_{4},$ $S_{2}\bigoplus S_{3},$ $S_{3}\bigoplus S_{4},$ $(S_{1} \bigoplus S_{3})\bigoplus S_{4},$ $(S_{2}\bigoplus S_{3})\bigoplus S_{4}$ and
$(S_{1}\bigoplus S_{2})\bigoplus (S_{3} \bigoplus S_{4})$.
~\\
\item STEP~3: Below is the list of cut-sets not yielding components that have an isolated patch.\\
~\\
\begin{tabular}{ll}
1. ~$S_{2}=(1,0,0,1,0,1)^{\top},$ & 2.~ $S_{3}=(0,1,0,1,0,0)^{\top},$\\
&\\
3.~ $S_{2}\bigoplus S_{4}=(1,0,1,0,0,1)^{\top},$ & 4.~ $(S_{1} \bigoplus S_{2})\bigoplus S_{3}=(0,1,0,0,1,1)^{\top},$\\
&\\
5.~ $(S_{1} \bigoplus S_{2})\bigoplus S_{4}=(0,0,1,0,1,1)^{\top}$. &\\
\end{tabular}\\
~\\
For each of these cut-sets, the Laplacian matrix pairs corresponding to the components is given as follows,\\
~\\
\begin{tabular}{ll}
1.~~ $\left(
\left[\begin{matrix}2&-2\\-2&2\end{matrix}\right],
\left[\begin{matrix}2&-2&0\\-2&5&-3\\0&-3&3\end{matrix}\right]
\right)$,& 2.~~
$\left(
\left[\begin{matrix}3&-3\\-3&3\end{matrix}\right],
\left[\begin{matrix}4&-3&-1\\-3&5&-2\\-1&-2&3\end{matrix}\right]
\right)$,\\
&\\
3.~~ $\left(
\left[\begin{matrix}2&-2\\-2&2\end{matrix}\right],
\left[\begin{matrix}2&-2&0\\0&5&-5\\-2&-5&7\end{matrix}\right]
\right)$, & 4.~~
$\left(
\left[\begin{matrix}3&-3\\-3&3\end{matrix}\right],
\left[\begin{matrix}3&-3&0\\-3&8&-5\\0&-5&5\end{matrix}\right]
\right)$,\\
&\\
5.~~ $\left(
\left[\begin{matrix}5&-5\\-5&5\end{matrix}\right],
\left[\begin{matrix}3&-3&0\\-3&5&-2\\0&-2&2\end{matrix}\right]
\right)$,
\end{tabular}\\
~\\
and the second-smallest eigenvalues of these Laplacian matrix pairs are given by\\
\begin{tabular}{lllll}
1.~(4,2.35), & 2.~(6,4.26), &3.~(4,2.64), &4.~(6,3.64), &5.~(10,2.35)
\end{tabular}\\
respectively.\\
\item STEP~4:
We need $\frac{1}{2}\underset{i}{\max}\left\{\tr J_{i}\right\}$ to be less than
$\lambda_2(L(G))=3.625$ to ensure the stability of the given network $G(5,6)$.
In particular, if $\frac{1}{2}\underset{i}{\max}\left\{\tr J_{i}\right\}=3$, then the revised list of potential cut-sets that
satisfy both the stability conditions are\\
~\\
\begin{tabular}{ll}
$S_{3}=(0,1,0,1,0,0)^{\top},$ & $(S_{1} \bigoplus S_{2})\bigoplus S_{3}=(0,1,0,0,1,1)^{\top}$.
\end{tabular}
\end{itemize}
~\\
The weight associated with these each cut-sets is $7$ and $5$ respectively.
Thus the maximum weighted cut-set that the system can tolerate is one and only $S_{3}$ qualifies.
If the objective is to minimize the cost of cutting the edges along the sustainable cut-set then
the lowest weighted cut-set is $(S_{1} \bigoplus S_{2})\bigoplus S_{3}$.

Another example we have illustrated is an Erd\H{o}s–R\'{e}nyi graph of 25 vertices, shown in Figure \ref{ErdosRenyi_graph}.
The graph is generated by starting with a set of distinct vertices and adding successive edges between them at random with probability $p=0.5$ \cite{Ide16}. Using exhaustive procedure, the most stable components (corresponding to the maximum possible Fiedler value) of the Erd\H{o}s–R\'{e}nyi graph, are shown in Figure \ref{ErdosRenyi_graphcut}.

\begin{figure}
\includegraphics[scale=0.45]{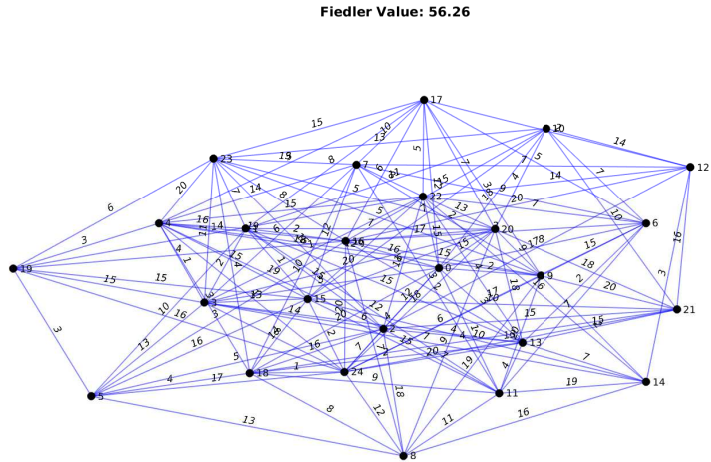}
\caption{A random graph generated by Erd\H{o}s–R\'{e}nyi model with $p=0.5$ and $n=25$, while edge weights distributed uniformly between 1 to 20.}
\label{ErdosRenyi_graph}
\end{figure}

\begin{figure}[h!]
\includegraphics[scale=0.45]{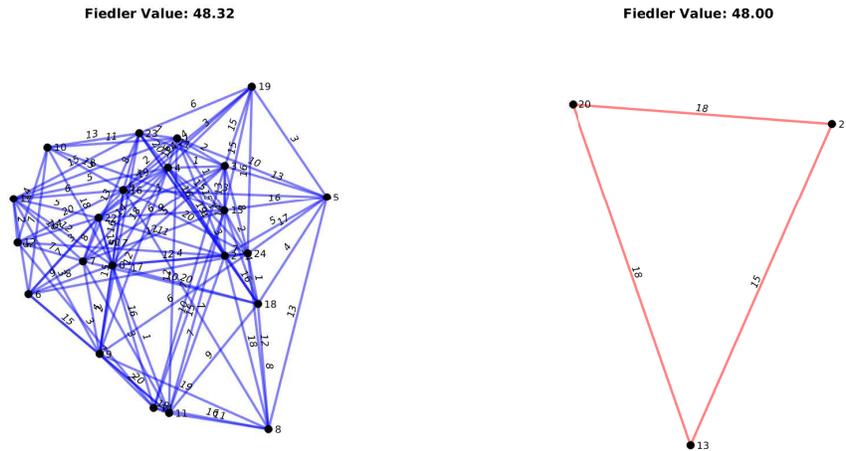}
\caption{Figure shows a graph partitioning of Erd\H{o}s–R\'{e}nyi graph \emph{i.e.,} Fig. \ref{ErdosRenyi_graph} by using exhaustive procedure with maximum possible Fiedler values of both components.}
\label{ErdosRenyi_graphcut}
\end{figure}

\section{Partitioning by Heuristic Bisection Method}
For large networks, the exhaustive partitioning is computationally too expensive. Existing min-cut
algorithms such as spectral bisection partitioning method \cite{Schulz13} and Kernighan-Lin algorithm \cite{Kernighan70}
tend not to cut the edges with more weights and include the edges with minimum weight in the cut-set. It gives the partitioned components higher Fiedler value because of the following result.

\textbf{Weyl's Monotonicity Theorem}\cite{Bhatia97}: \emph{Let $\lambda_j$ be the $j$-th eigenvalue of an $n\times n$ symmetric matrix $A$. If $P$ is a positive semidefinite symmetric matrix, then $\lambda_j(A+P)\geq\lambda_j(A)$,~ $1\leq j\leq n$.}

The above theorem implies that if we increase the weight on an edge of a graph (i.e., perturb the Laplacian with a positive semidefinite symmetric matrix)
then the eigenvalues of the graph's Laplacian will either increase or remain the
same.

Unfortunately the min-cut algorithms do not guarantee well connectedness (large Fiedler value) of components. For example, Figure \ref{spectralcut} shows the spectral bisection of the Erd\H{o}s–R\'{e}nyi graph in Figure \ref{ErdosRenyi_graph} and can be compared with Figure 6 (to be discussed later). In Figure 6, the components have greater Fiedler value than in those in Figure \ref{spectralcut}. The spectral min-cut algorithm cuts the edges with minimum weights but the resultant partitioned components have Fielder values lower than that can be attained.

In this section we provide a graph bi-partitioning algorithm, which starts with a random partition and searches locally around it for an appropriate partitioning that provides stable components. This algorithm also sets up the basis for general partitioning problems such as $k$-way partitioning and partitioning into unequal size components. Motivation for this algorithm is Weyl's Monotonicity Theorem (mentioned above).

Let $G$ be a graph with $n$ nodes, which is to be partitioned into two components $C_{1}$ and $C_2$ with at most one node difference in their sizes (i.e., $||C_1|-|C_2||\leq1$), such that $\lambda_2(L(C_i)), ~i=1,2$ is maximum. Starting with an arbitrary partition of $G$, the appropriate partition can be achieved by pushing the large weighted edges which lie on the graph cut into graph components. This is done by swapping the set of pairs of nodes associated with large weighted edges on the cut, while making sure these nodes take minimal weights away from their respective origin components to the graph cut.

Let $A$ be the set of nodes from component $C_1$ and $B$ be the set of nodes from component $C_2$ by swapping which we are obtain the desired partition.
The following steps identify $A$ and $B$.

\begin{itemize}
\item STEP~1: Determine the difference between the external cost and the internal cost for each node in both the components, defined as the following way:
\[\text{External cost of node}~x: ~~E_x=\underset{y}{\sum}w_{xy},~ ~\text{where}~ x ~\text{and}~ y ~\text{belong to different components.}\]
\[\text{Internal cost of node}~x:~~I_x=\underset{y}{\sum}w_{xy}, ~~\text{where}~ x ~\text{and}~ y ~\text{ both belong to the same component.}\]
\item STEP~2: Select $a_1\in C_1$ and  $b_1\in C_2$ such that $a_1=\underset{s\in C_1}{\max}\{E_s-I_s\}$ and $b_1=\underset{t\in C_2}{\max}\{E_t-I_t\}$.
\item STEP~3: Repeat the STEP~1 and STEP~2 for the components $C_1-\{a_1\}$ and $C_2-\{b_1\}$.
\item STEP~4: Repeat the STEP~3 until  all nodes are exhausted in any of the components.
\item STEP~5: List all  $a_i \in C_1$ and $b_i \in C_2$, that are obtained in the STEP~2.
\item STEP~6: Determine ${\lambda }^{C_1}_{2}(L)\{k\}$ and ${\lambda}^{C_2}_{2}(L)\{k\}$, that is, the second-smallest Laplacian eigenvalue after swapping $k$-pairs of nodes $\{a_1,\dots,a_k \} \in C_1, \{b_1,\dots,b_k\} \in C_2,~ 1\leq k < n/2$.
\item STEP~7: Choose the number $k$ for which ${\lambda }^{C_i}_{2}(L)\{k\}\geq \theta, ~i=1,2$, and swap these $k$-pairs of nodes for the desired partition, where $\theta= \underset{r}{\max}\{\tau +r ~ |~{\lambda }^{C_i}_{2}(L)\{k\}\geq \tau +~r, ~r\in \mathbb{R}\} $.
\end{itemize}

\begin{figure}[h!]
\centering
\includegraphics[scale=0.45]{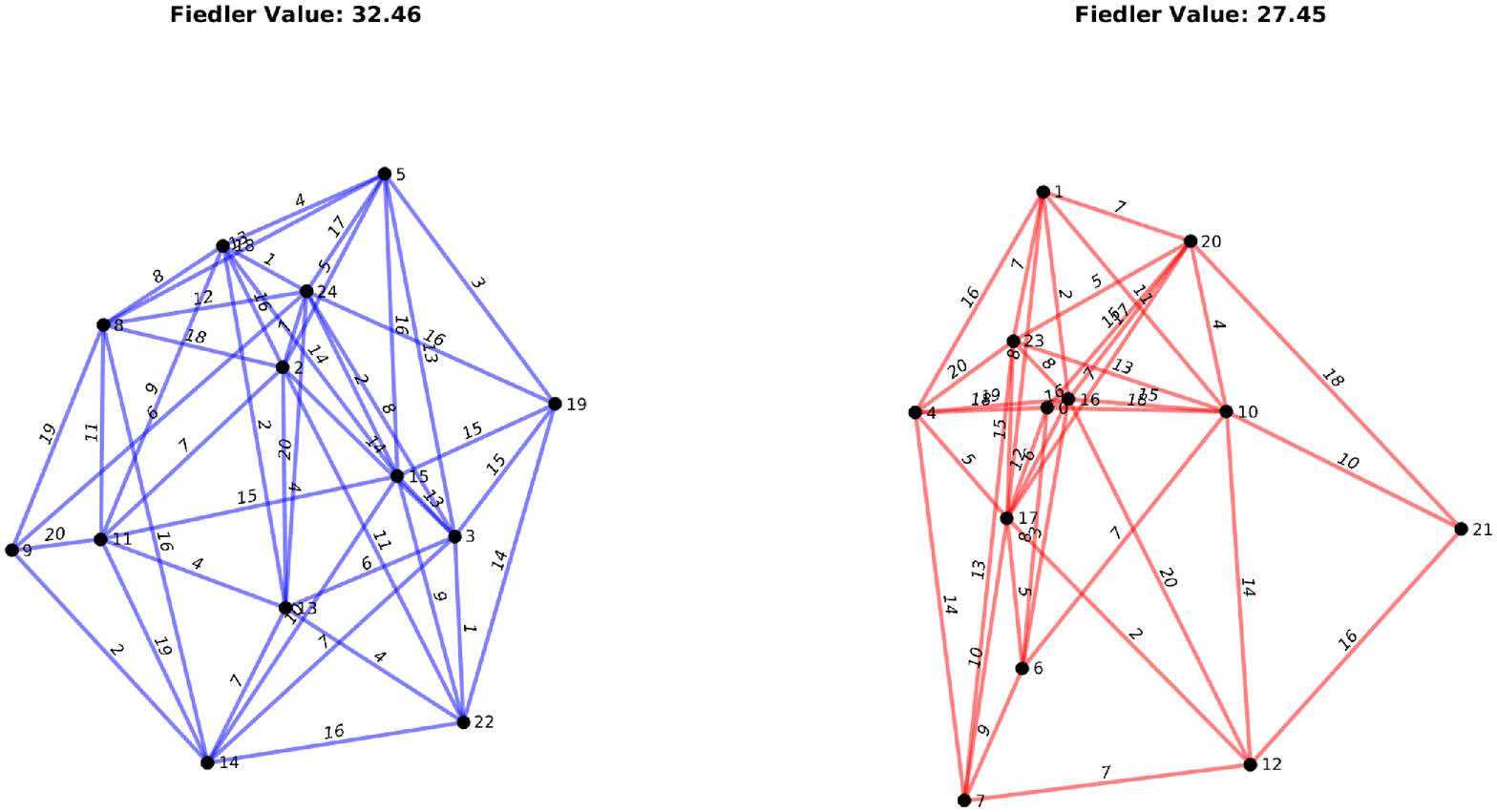}
\caption{Figure shows a graph partition of Erd\H{o}s–R\'{e}nyi graph \emph{i.e.,} Fig. \ref{ErdosRenyi_graph} with spectral bisection algorithm.}
\label{spectralcut}
\end{figure}

This way we can generate ecologically sustainable partitions with this method starting different and randomly chosen initial partitions and can choose
the partition that produces the most stable components. From partitions generated by the heuristic algorithm starting with $100$ initial random partitions of the graph in Figure \ref{ErdosRenyi_graph}, Figure \ref{proposedcut} shows the most stable partition (corresponds to initial partition Figure \ref{proposedcut2}) obtained with $\theta=\tau + r=37.03$, where the threshold value $\tau =28$. Partitioning by the heuristic bisection method produces higher Fiedler valued components than the plain spectral bisection partitioning (Figure \ref{spectralcut}).

\begin{figure}[h!]
\label{Cut_proposed}
\includegraphics[scale=0.45]{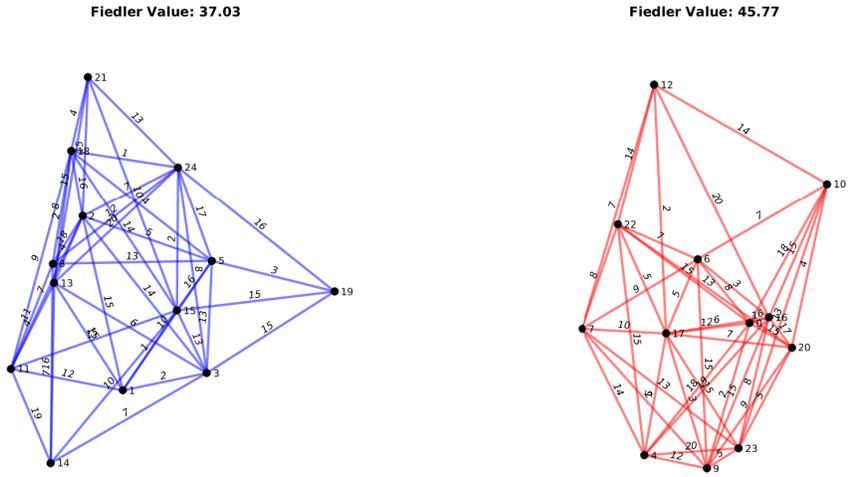}
\caption{Figure illustrates a graph partition of Erd\H{o}s–R\'{e}nyi graph (Fig. \ref{ErdosRenyi_graph}) by using the heuristic bisection algorithm. This partition is the most stable among $100$ trials with different starting partitions.}
\label{proposedcut}
\end{figure}

\begin{figure}[h!]
\label{IniCut_proposed}
\includegraphics[scale=0.45]{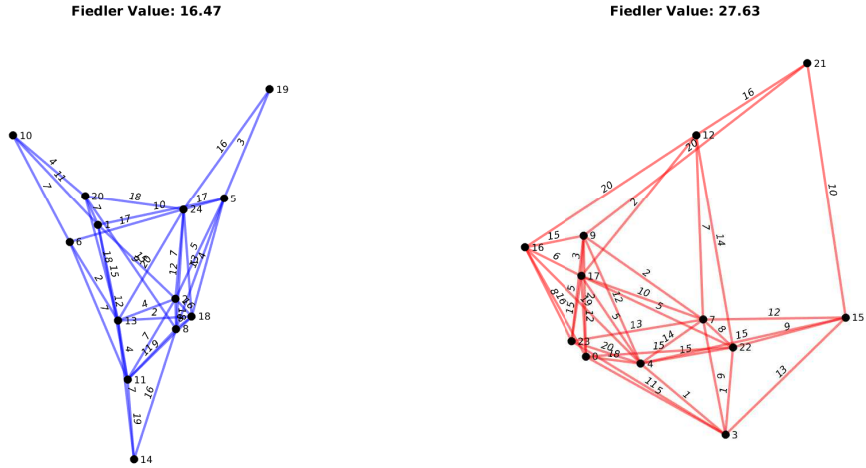}
\caption{Figure shows a random graph partition of Erd\H{o}s–R\'{e}nyi model graph (Fig. \ref{ErdosRenyi_graph}) from which the graph partition Figure 6 is obtained iteratively by swapping pairs of nodes.}
\label{proposedcut2}
\end{figure}

\section{Conclusion \& Discussion}

In this paper we analyzed the stability of ecological metapopulations networks and examined the threshold conditions for the existence of a cut that crosses the metapopulations network without distabilizing the populations. The ecological metapopulations network is considered as a weighted graph $G(V,E)$, the weights on each edge being diectly proportional to the diffusion rate between the nodes (i.e., the habitat patches) connected by the edge. Interaction of two species (predator and prey) on each habitat patch of the network and their diffusion across the habitat patches constitutes the reaction-diffusion
ordinary differential equation model. In a sort of reverse of Turing-instability concept, it has been assumed that the populations are unstable in each of the nodes and the populations are stabilized by the diffusion among the nodes (population patch habitats); else the partitioning problem is meaningless.

Local stability of the metapopulations model governed by the system of reaction-diffusion ordinary differential equations is determined by linearizing the system. There is a significant role of second smallest eigenvalue (Fiedler value) and its importance is observed in deciding the metapopulations stability. The present study finds that the Laplacian Fiedler value must be greater than the threshold value $\frac{1}{2} \underset{i}{\max}\left\{\tr J_{i}\right\}, i=1,2,\dots,n$ for achieving the stability in the spatially distributed populations. In ecological terms, the sustainable separation of the patches into two non-communicative groups (components) is possible provided the populations dynamics inside each habitat patch and populations' dispersal movement among the patches have some appropriate relation. Dispersal movement of species in the network is found to be a crucial factor for stabilizing the populations and thus a high Fiedler value in the partitioned components of the network is important for persistence of the metapopulations.

We have examined an exhaustive procedure and an efficient heuristic bisection algorithm that provides a graph cut (network partition), provided one
exists, such that the metapopulations remain stable even after it is divided into two disconnected and disjoint subnetworks. The exhaustive procedure determines all possible cuts of the graph before deciding an appropriate cut, and the complexity of the procedure increases exponentially with the size of the network. The heuristic bisection algorithm gives the best possible graph cut in the neighborhood of an arbitrary cut (usually obtained by the a random process) from where the algorithm starts and this locally best possible graph cut is achieved by swapping pairs of nodes (loosely, at most $n/2$ nodes) to maximize intra-component Fiedler values. Using different starting cuts for the heuristic bisection algorithm, many ecologically sustainable partitions can be obtained. Having many sustainable partitions we can converge to the most stable partition.

Further work needs to consider directed graphs for inclusion of both sided diffusion and/or taking the predator's diffusion rate dependent on prey gradient. Also transient stability of the partitioning process and time needed for stabilization after the partitioning are other research issues.

\centering
\begin{table}
\begin{tabular}{ |l|l l|c| }
\hline
\textbf{S.No.}& \centering \textbf{Cut-sets} && \textbf{Graph Components}\\
\hline
1.&$S_{1}=(1,0,0,0,1,0)^{\top}$&$=\{e_1,e_5\}$&
\begin{pspicture}(-0.05,-0.05)(1.2,1.2)
\psline[showpoints=true](0,1)(0,1)
\psline[showpoints=true](0.5,1)(1,0.5)
\psline[showpoints=true](1,0.5)(0.5,0)
\psline[showpoints=true](0.5,0)(0,0)
\psline[showpoints=true](0,0)(0.5,1)
\end{pspicture} \\
\hline

2.&{\color{red}$S_{2}=(1,0,0,1,0,1)^{\top}$}&$=\{e_1,e_4,e_6\}$& \begin{pspicture}(-0.05,-0.05)(1.2,1.2)
\psline[showpoints=true](0.5,1)(1,0.5)
\psline[showpoints=true](1,0.5)(0.5,0)
\psline[showpoints=true](0,0)(0,1)
\end{pspicture} \\
\hline
3.&{\color{red}$S_{3}=(0,1,0,1,0,0)^{\top}$}&$=\{e_2,e_4\}$& \begin{pspicture}(-0.05,-0.05)(1.2,1.2)
\psline[showpoints=true](0,1)(0.5,1)
\psline[showpoints=true](1,0.5)(0.5,0)
\psline[showpoints=true](0,0)(0,1)
\psline[showpoints=true](0,0)(0.5,1)
\end{pspicture} \\
\hline
4.&$S_{4}=(0,0,1,1,0,0)^{\top}$&$=\{e_3,e_4\}$&\begin{pspicture}(-0.05,-0.05)(1.2,1.2)
\psline[showpoints=true](0.5,0)(0.5,0)
\psline[showpoints=true](0,1)(0.5,1)
\psline[showpoints=true](0.5,1)(1,0.5)
\psline[showpoints=true](0,0)(0,1)
\psline[showpoints=true](0,0)(0.5,1)
\end{pspicture} \\
\hline
5.&$S_{1} \bigoplus S_{2}=(0,0,0,1,1,1)^{\top}$&$=\{e_4,e_5,e_6\}$& \begin{pspicture}(-0.05,-0.05)(1.2,1.2)
\psline[showpoints=true](0,0)(0,0)
\psline[showpoints=true](0,1)(0.5,1)
\psline[showpoints=true](0.5,1)(1,0.5)
\psline[showpoints=true](1,0.5)(0.5,0)
\end{pspicture} \\
\hline
6.&$S_{1} \bigoplus S_{3}=(1,1,0,1,1,0)^{\top}$&$=\{e_1,e_5\}\cup\{e_2,e_4\}$& \begin{pspicture}(-0.05,-0.05)(1.2,1.2)
\psline[showpoints=true](0,1)(0,1)
\psline[showpoints=true](1,0.5)(0.5,0)
\psline[showpoints=true](0,0)(0.5,1)
\end{pspicture} \\
\hline
7.&$S_{1} \bigoplus S_{4}=(1,0,1,1,1,0)^{\top}$&$=\{e_1,e_5\}\cup\{e_3,e_4\}$& \begin{pspicture}(-0.05,-0.05)(1.2,1.2)
\psline[showpoints=true](0,1)(0,1)
\psline[showpoints=true](0.5,0)(0.5,0)
\psline[showpoints=true](0.5,1)(1,0.5)
\psline[showpoints=true](0,0)(0.5,1)
\end{pspicture} \\
\hline
8.&$S_{2}\bigoplus S_{3}=(1,1,0,0,0,1)^{\top}$&$=\{e_1,e_2,e_6\}$& \begin{pspicture}(-0.05,-0.05)(1.2,1.2)
\psline[showpoints=true](0.5,1)(0.5,1)
\psline[showpoints=true](1,0.5)(0.5,0)
\psline[showpoints=true](0.5,0)(0,0)
\psline[showpoints=true](0,0)(0,1)
\end{pspicture} \\
\hline
9.&{\color{red}$S_{2}\bigoplus S_{4}=(1,0,1,0,0,1)^{\top}$}&$=\{e_1,e_3,e_6\}$& \begin{pspicture}(-0.05,-0.05)(1.2,1.2)
\psline[showpoints=true](0.5,1)(1,0.5)
\psline[showpoints=true](0.5,0)(0,0)
\psline[showpoints=true](0,0)(0,1)
\end{pspicture} \\
\hline
10.&$S_{3}\bigoplus S_{4} =(0,1,1,0,0,0)^{\top}$&$=\{e_2,e_3\}$& \begin{pspicture}(-0.05,-0.05)(1.2,1.2)
\psline[showpoints=true](1,0.5)(1,0.5)
\psline[showpoints=true](0,1)(0.5,1)
\psline[showpoints=true](0.5,0)(0,0)
\psline[showpoints=true](0,0)(0,1)
\psline[showpoints=true](0,0)(0.5,1)
\end{pspicture} \\
\hline
11.&{\color{red}$(S_{1} \bigoplus S_{2})\bigoplus S_{3}=(0,1,0,0,1,1)^{\top}$}&$=\{e_2,e_5,e_6\}$& \begin{pspicture}(-0.05,-0.05)(1.2,1.2)
\psline[showpoints=true](0,1)(0.5,1)
\psline[showpoints=true](1,0.5)(0.5,0)
\psline[showpoints=true](0.5,0)(0,0)
\end{pspicture} \\
\hline
12.&{\color{red}$(S_{1} \bigoplus S_{2})\bigoplus S_{4}=(0,0,1,0,1,1)^{\top}$}&$=\{e_3,e_5,e_6\}$& \begin{pspicture}(-0.05,-0.05)(1.2,1.2)
\psline[showpoints=true](0,1)(0.5,1)
\psline[showpoints=true](0.5,1)(1,0.5)
\psline[showpoints=true](0.5,0)(0,0)
\end{pspicture} \\
\hline
13.&$(S_{1} \bigoplus S_{3})\bigoplus S_{4}=(1,1,1,0,1,0)^{\top}$&$=\{e_1,e_5\}\cup\{e_2,e_3\}$&\begin{pspicture}(-0.05,-0.05)(1.2,1.2)
\psline[showpoints=true](0,1)(0,1)
\psline[showpoints=true](1,0.5)(1,0.5)
\psline[showpoints=true](0.5,0)(0,0)
\psline[showpoints=true](0,0)(0.5,1)
\end{pspicture} \\
\hline
14.&$(S_{2}\bigoplus S_{3})\bigoplus S_{4}=(1,1,1,1,0,1)^{\top}$&$=\{e_1,e_4,e_6\}\cup\{e_2,e_3\}$&\begin{pspicture}(-0.05,-0.05)(1.2,1.2)
\psline[showpoints=true](0.5,1)(0.5,1)
\psline[showpoints=true](1,0.5)(1,0.5)
\psline[showpoints=true](0.5,0)(0.5,0)
\psline[showpoints=true](0,0)(0,1)
\end{pspicture} \\
\hline
15.&$(S_{1}\bigoplus S_{2})\bigoplus (S_{3} \bigoplus S_{4})=(0,1,1,1,1,1)^{\top}$&$=\{e_2,e_5,e_{6}\}\cup\{e_3,e_4\}$& \begin{pspicture}(-0.05,-0.05)(1.2,1.2)
\psline[showpoints=true](0,0)(0,0)
\psline[showpoints=true](1,0.5)(1,0.5)
\psline[showpoints=true](0.5,0)(0.5,0)
\psline[showpoints=true](0,1)(0.5,1)
\end{pspicture} \\
\hline
\end{tabular}
\caption{Table shows all the cut-sets of the graph $G(5,6)$ and their respective components. Cut-sets highlighted in red color shows no isolation of a patch. }
\end{table}

\end{document}